\newtheorem{theorem}{Theorem}[section]
\newtheorem{lemma}[theorem]{Lemma}
\newtheorem{cor}[theorem]{Corollary}
\newtheorem{prop}[theorem]{Proposition}
\newtheorem{definitiontemp}[theorem]{Definition}
\newenvironment{definition}{\begin{definitiontemp}
\normalfont}{\end{definitiontemp}}
\theoremstyle{remark}
\newtheorem*{remark}{Remark}
\newcommand{\cp}{\mathbb{C}_p}
\newcommand{\qp}{\mathbb{Q}_p}
\newcommand{\lb}{\left[}
\newcommand{\rb}{\right]}
\newcommand{\D}{\mathcal{D}}
\newcommand{\Dx}{\left(\D\frac{1}{x}\right)\!}
\newcommand{\zp}{\mathbb{Z}_p}
\newcommand{\zpx}{\mathbb{Z}_p^{\times}}
\newcommand{\gp}{\Gamma_{\!\!p}}
\newcommand{\lgp}{\text{Log}\,\gp}
\newcommand{\f}{\text{exp}\,(x^p/p+x)}
\newcommand{\dind}[2]
{\genfrac{}{}{0pt}{}{#1}{#2}}
\title{Frobenius map and the $p$-adic Gamma function}
\author{Ilya Shapiro}
\begin{document}
\maketitle

\begin{abstract}
In this note we study the relationship between the power series expansion of the Dwork exponential and the Mahler expansion of the $p$-adic Gamma function.  We exploit this relationship to prove that certain quantities that appeared in our computations in \cite{quintic} can be expressed in terms of the derivatives of the $p$-adic Gamma function at $0$.  We then use this to prove the claim made in \cite{quintic} about the non-trivial off-diagonal entry in the Frobenius matrix of the mirror quintic threefold.
\end{abstract}
\bigskip
\noindent \emph{Keywords}: Dwork exponential, $p$-adic $L$-functions, $p$-adic Gamma function,  Mahler series.

\section{Introduction.}

The paper \cite{quintic} was motivated by the recent use of $p$-adic methods in proving integrality results in the theory of topological strings (with \cite{ksv} a good starting point, see also \cite{sv1}, \cite{sv2} and \cite{vologodsky}).  Their main idea was the expression of certain physical quantities in terms of the Frobenius map on $p$-adic cohomology.

In \cite{quintic} we studied the Frobenius map on the cohomology of the mirror quintic family at the point of maximally unipotent monodromy.  More precisely, we looked at the limit of the Frobenius action at $\lambda=0$ of an appropriate quotient of the family of projective hypersurfaces $$\lambda(x_0^5+x_1^5+x_2^5+x_3^5+x_4^5)+x_0 x_1 x_2 x_3 x_4=0.$$ We used a Dwork-like complex, in the modern language it can be understood as the rigid cohomology with coefficients, to compute the Frobenius map directly.  This allowed us to bypass (in the quintic case) the machinery of motives that was needed in \cite{ksv}.

While successful in proving the vanishing of a certain Frobenius matrix element (this vanishing was the key component of \cite{ksv}), we were unable to demonstrate the conjectural relation of the only non-zero off-diagonal element to the $p$-adic zeta value at $3$.  This relation was communicated to us by the authors of \cite{ksv}, and numerically verified in \cite{quintic}.  The purpose of this note is to finally prove this conjecture for the case of the mirror quintic.  Namely, the non-trivial off-diagonal element is $$p^3\frac{2^3}{5^2}L_p(3,\omega^{-2})=(p^3-1)\frac{2^3}{5^2}\zeta_p(3)$$ where $L_p$ is the Kubota-Leopoldt $p$-adic $L$-function and $\omega$ is the Teichm\"{u}ller character.

The explanation for the presence of the zeta values in the Frobenius matrix is to be found among the conjectures in the theory of motives. Namely, the appearance of the $\zeta_p(3)$ factor (up to a rational number)
would follow if one knew that the limiting motive of the family is mixed Hodge-Tate.  However in the present concrete case of the mirror quintic family it can be understood in terms of the relationship between the Dwork exponential that appears in the definition of the Frobenius action, and the $p$-adic Gamma function that is closely related to the zeta values.

\subsection{Preliminaries.}

The $p$-adic Gamma function $\gp(x)$ is the $p$-adic analogue of the usual Gamma function.  It extends in a natural way, to the $p$-adic integers $\zp$, a slight modification of the factorial function on the integers.  More precisely, we have that $$\gp(x+1)=\begin{cases}
-x\gp(x) & |x|=1\\
-\gp(x) & |x|<1
\end{cases}$$ where $|\cdot|$ denotes the $p$-adic norm on $\zp$, i.e., $|x|<1$ if and only if $x\in p\zp$ and $|x|=1$ if and only if $x\in\zpx$.

We let $$(x)_n=x(x-1)...(x-n+1)$$ denote the \emph{falling} factorials.  We reserve the notation $(x)^n$ for the rising factorials, i.e., $(x)^n=x(x+1)...(x+n-1)$.  Note that $(x)^n=(-1)^n(-x)_n$.

By default $$g=\sum a_i x^i$$ is a power series with coefficients in $\cp$, the completion of the algebraic closure of $\qp$, the field of $p$-adic numbers.  We assume that $|a_i i!|\rightarrow 0$ which ensures that $g(x)$ is defined on $p\zp$, though perhaps not on $\zp$ itself. We call an expression $$\varphi=\sum a_i (x)_i$$ a Mahler series.  We again assume that $a_i\in\cp$ and $|a_i i!|\rightarrow 0$, so that $\varphi(x)$ is a continuous function on $\zp$.

For $\phi$ a function on $\zp$, let $$\nabla\phi(x)=\phi(x+1)-\phi(x)$$ be the forward difference operator.

As in \cite{quintic}, let $\D$ denote the formal differential operator of infinite order defined by $$\D=\sum_{i=0}^\infty \partial_x^i.$$

For a formal Laurent series $g=\sum a_i x^i$, we denote by $[g]_0$ the coefficient of $x^0$, i.e., $$[g]_0=a_0.$$

All sums are from $0$ to $\infty$ unless stated otherwise.

We call the power series $$f=\f=\sum B_i x^i$$ the Dwork exponential.  It defines a function on $p\zp$ though not on $\zp$.  Note that in our context $B_i$ are simply the coefficients in the expansion of $f$ and do not stand for the Bernoulli numbers.

In everything that follows, $p$ is assumed to be an \emph{odd} prime.

\section{From power series to Mahler series.}
We begin the study of the formal properties of the transform that converts power series to Mahler series.

\begin{definition}
Let $g=\sum a_i x^i$ be a power series, then denote by $$[g]=\sum a_i (-1)^i (x)_i$$ the associated Mahler series.
\end{definition}

Some properties of the above transform are immediate from the definition.

\begin{lemma}
Let $g$ be a power series. Then \begin{enumerate}\item $[x^n g]=(-1)^n(x)_n[g](x-n)$, \item $[\partial g]=-\nabla[g]$.\end{enumerate}
\end{lemma}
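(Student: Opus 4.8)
The plan is to prove both identities by expanding each side in the basis of falling factorials $(x)_i$ and matching coefficients, after reducing everything to two elementary identities for falling factorials that serve as the engine of the whole computation.

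For part (1), I would first record the multiplicativity relation
$$(x)_n\,(x-n)_i=(x)_{n+i},$$
which is immediate from the definition, since concatenating the $n$ factors $x(x-1)\cdots(x-n+1)$ with the $i$ factors $(x-n)(x-n-1)\cdots(x-n-i+1)$ yields exactly $x(x-1)\cdots(x-n-i+1)=(x)_{n+i}$. Writing $g=\sum a_i x^i$, so that $x^n g=\sum_i a_i x^{n+i}$ and hence $[x^n g]=\sum_i a_i(-1)^{n+i}(x)_{n+i}$ directly from the definition of the transform, I would then expand the claimed right-hand side as
$$(-1)^n(x)_n[g](x-n)=(-1)^n(x)_n\sum_i a_i(-1)^i(x-n)_i=\sum_i a_i(-1)^{n+i}(x)_n(x-n)_i,$$
and apply the multiplicativity relation to see that the two expansions agree term by term.

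For part (2), the relevant engine is the forward-difference rule $\nabla(x)_i=i\,(x)_{i-1}$, obtained by factoring the common head $x(x-1)\cdots(x-i+2)=(x)_{i-1}$ out of both $(x+1)_i$ and $(x)_i$ and simplifying the residual bracket $(x+1)-(x-i+1)=i$. Differentiating $g$ termwise gives $\partial g=\sum_i(i+1)a_{i+1}x^i$, so $[\partial g]=\sum_i(i+1)a_{i+1}(-1)^i(x)_i$; on the other side, applying $\nabla$ termwise to $[g]=\sum_i a_i(-1)^i(x)_i$ and invoking the difference rule produces $\nabla[g]=\sum_i a_i(-1)^i\,i\,(x)_{i-1}$, and reindexing by $j=i-1$ rewrites this as $-\sum_j(j+1)a_{j+1}(-1)^j(x)_j=-[\partial g]$.

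Since the computation is mechanical once the two falling-factorial identities are in hand, I do not anticipate a genuine conceptual obstacle; the main care is in the bookkeeping of the signs $(-1)^i$ and the index shifts under reindexing. The one point worth flagging is the formal legitimacy of applying the transform termwise to the infinite series $x^n g$ and $\partial g$: here the hypothesis $|a_i\,i!|\to 0$ guarantees that both $[x^n g]$ and $[\partial g]$ are again bona fide Mahler series, but for the coefficient-wise identities asserted in the lemma term-by-term matching already suffices.
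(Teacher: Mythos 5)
Your proposal is correct and follows essentially the same route as the paper: part (1) via termwise expansion and the identity $(x)_n(x-n)_i=(x)_{n+i}$, and part (2) via the difference rule $\nabla(x)_i=i\,(x)_{i-1}$ applied termwise (which the paper only sketches with "similarly"). No issues.
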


\begin{proof}
Let $g=\sum a_i x^i$, then \begin{align*}[x^n g]&=[\sum a_i x^{n+i}]=\sum a_i (-1)^{n+i}(x)_{n+i}\\&=\sum a_i (-1)^{n+i}(x)_n (x-n)_i\\&=(-1)^n(x)_n\sum a_i (-1)^i(x-n)_i=(-1)^n(x)_n[g](x-n).\end{align*}  Similarly, since $[\partial x^i]=\nabla(x)_i$, we obtain the second part.
\end{proof}

\begin{lemma}\label{inverse}
Let $g=\sum a_i x^i$ be a power series such that $|a_i i!|\rightarrow 0$, then $[g]$ defines a continuous function on $\zp$ and  $$g=e^x\sum[g](n)(-1)^n x^n/n!$$ as power series.
\end{lemma}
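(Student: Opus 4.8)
The plan is to treat the two assertions separately, the continuity being immediate and the power-series identity being the substance. For continuity, observe that $[g]=\sum a_i(-1)^i(x)_i$ is itself a Mahler series, with coefficients $b_i=(-1)^i a_i$, and that $|b_i\,i!|=|a_i\,i!|\to 0$ by hypothesis. Hence $[g]$ is a continuous function on $\zp$ by the very convention under which Mahler series were defined above (equivalently, writing $(x)_i=i!\binom{x}{i}$, the binomial coefficients of $[g]$ tend to $0$ in $\cp$, so Mahler's theorem applies).

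For the identity, the key simplification is to move the exponential to the other side. Multiplying the claimed equality by $e^{-x}$, it becomes equivalent to
$$e^{-x}g=\sum_n [g](n)\,(-1)^n\,\frac{x^n}{n!},$$
that is, to the assertion that the coefficient of $x^n$ in the formal power series $e^{-x}g$ equals $[g](n)(-1)^n/n!$ for every $n\ge 0$. Since we are comparing formal power series, it suffices to verify this one coefficient at a time, and each such verification is a finite computation. I would then compute the coefficient of $x^n$ in $e^{-x}g$ directly: writing $e^{-x}=\sum_m(-1)^m x^m/m!$ and $g=\sum_i a_i x^i$, the Cauchy product gives $\sum_{i=0}^n a_i(-1)^{n-i}/(n-i)!$ for this coefficient. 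Multiplying by $(-1)^n n!$ turns it into $\sum_{i=0}^n a_i(-1)^i\,n!/(n-i)!=\sum_{i=0}^n a_i(-1)^i(n)_i$, using the identity $n!/(n-i)!=(n)_i$.

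It then remains only to recognise this finite sum as $[g](n)$, and this is the one point that deserves care, since it is where the two sides genuinely meet: $[g](n)$ is the \emph{value at the integer $n$} of the continuous function $[g]=\sum_i a_i(-1)^i(x)_i$, whereas the coefficient computation produced a \emph{finite} sum. The two agree because, for a non-negative integer $n$, the falling factorial $(n)_i=n(n-1)\cdots(n-i+1)$ vanishes as soon as $i>n$; thus the a priori infinite series defining $[g](n)$ truncates exactly to $\sum_{i=0}^n a_i(-1)^i(n)_i$, closing the argument. (One could instead expand $[g](n)$ into this finite sum from the outset, substitute, exchange the order of summation, and collapse the inner sum using $\sum_k\binom{M}{k}(-1)^k=0$ for $M>0$; the computation above simply bypasses that binomial inversion.)

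I do not anticipate a genuine obstacle here. The hypothesis $|a_i\,i!|\to 0$ plays no role in the combinatorial identity itself, which holds coefficient by coefficient as formal power series; it is used only to guarantee that $[g]$ is a bona fide continuous function, so that the symbol $[g](n)$ is meaningful and the displayed expansion is more than a purely formal rearrangement. The only subtlety worth flagging in the write-up is therefore the finite-versus-infinite reconciliation via the vanishing of $(n)_i$ for $i>n$.
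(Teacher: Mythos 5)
Your proof is correct, and it is worth noting that the paper itself does not actually prove this lemma: its entire ``proof'' is the sentence ``This is just a restatement of the results that can be found in [Robert, \emph{A course in $p$-adic analysis}].'' So you have supplied what the paper outsources. Your argument is sound on every point: the continuity claim is exactly Mahler's theorem applied to the coefficients $(-1)^i a_i$, which satisfy the same decay hypothesis $|a_i\,i!|\to 0$; the power-series identity reduces, after multiplying by $e^{-x}$, to a coefficient-by-coefficient check via the Cauchy product; the identity $n!/(n-i)!=(n)_i$ and the truncation of $[g](n)=\sum_i a_i(-1)^i(n)_i$ at $i=n$ (because $(n)_i=0$ for $i>n$) close the gap between the finite coefficient computation and the value of the continuous function at $n$. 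You are also right that the hypothesis $|a_i\,i!|\to 0$ is irrelevant to the formal identity and serves only to make $[g](n)$ meaningful as the value of a function. The one thing your self-contained verification buys over the paper's citation is transparency about exactly which ingredients are formal combinatorics and which are $p$-adic analysis; the citation buys brevity and places the lemma in the standard context of Mahler expansions, where the inverse transform $g\mapsto[g]$ and its left inverse via $e^{-x}g$ are part of a general theory. No gap.
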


\begin{proof}
This is just a restatement of the results that can be found in \cite{robert}.
\end{proof}

The following lemma is the reason for the appearance of the Mahler series in the computation of the Frobenius matrix.  It relates the objects that arose in the cohomology computations, to the Stirling numbers of the second kind.  This connection was the link missing in \cite{quintic}.

\begin{lemma}
The following holds: $$\lb \Dx^s x^k\rb_0=\frac{(-1)^{s+k}}{s!}(x)_k^{(s)}(0).$$
\end{lemma}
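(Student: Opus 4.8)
The plan is to reduce both sides to a single combinatorial quantity and then match them. On the right, expanding the falling factorial as $(x)_k=\sum_j s(k,j)x^j$ in terms of the signed Stirling numbers of the first kind $s(k,j)$ shows that $\frac{1}{s!}(x)_k^{(s)}(0)$ is exactly the coefficient $s(k,s)$, so the asserted identity is equivalent to
$$\lb \Dx^s x^k\rb_0=(-1)^{s+k}s(k,s),$$
whose right-hand side is the unsigned Stirling number $c(k,s)=(-1)^{k-s}s(k,s)$. It therefore suffices to show that the constant coefficient on the left obeys the same recursion, or equivalently the same generating function, as $c(k,s)$.

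First I would record the action of the operator on a monomial. For $k\ge 1$ one has $\Dx x^k=\D x^{k-1}=\sum_{i\ge 0}(k-1)_i\,x^{k-1-i}$, which is a polynomial because $(k-1)_i=0$ once $i\ge k$. Writing $a_s(k)=\lb \Dx^s x^k\rb_0$ and iterating, applying the constant-term functional turns this into the recursion, valid for $k\ge1$,
$$a_s(k)=\sum_{i=0}^{k-1}(k-1)_i\,a_{s-1}(k-1-i)=(k-1)!\sum_{m=0}^{k-1}\frac{a_{s-1}(m)}{m!},$$
together with the initial data $a_0(k)=\lb x^k\rb_0=\delta_{k,0}$ and $a_s(0)=\delta_{s,0}$.

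The remaining step is to solve this recursion. Setting $P_s(k)=a_s(k)/k!$ and passing to the generating function $\mathcal P_s(u)=\sum_k P_s(k)u^k$, the recursion becomes the differential relation $\mathcal P_s'(u)=\mathcal P_{s-1}(u)/(1-u)$ with $\mathcal P_0=1$ and $\mathcal P_s(0)=0$ for $s\ge 1$, whose solution is $\mathcal P_s(u)=(-\log(1-u))^s/s!$. On the other side, differentiating the identity $\sum_k (x)_k\,u^k/k!=(1+u)^x$ in $x$ and setting $x=0$ gives $\sum_k (x)_k^{(s)}(0)\,u^k/k!=(\log(1+u))^s$; the sign substitution $u\mapsto -u$ then matches the two series coefficientwise and produces exactly $a_s(k)=\frac{(-1)^{s+k}}{s!}(x)_k^{(s)}(0)$.

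The main thing to be careful about is that $\Dx$ is an operator of infinite order, so a priori $\Dx^s x^k$ is a genuine Laurent series and the extraction $\lb\ \cdot\ \rb_0$ must be justified. The key point is that $\Dx$ sends the span of strictly negative powers of $x$ into itself, since $\Dx x^m=\D x^{m-1}$ has top degree $m-1<0$ when $m\le 0$; hence the negative tail produced once a factor $x^{-1}$ appears can never feed back into the constant coefficient. This is precisely what collapses the iteration to the finite recursion above and is encoded in $a_s(0)=\delta_{s,0}$. Once this collapse is established, the generating-function computation is entirely mechanical, so I expect the delicate point to be this convergence/stability argument rather than the final identity matching.
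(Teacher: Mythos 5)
Your argument is correct, and it takes a genuinely different route from the paper's. The paper proves the lemma by induction on $k$: it invokes the recursion $\lb\Dx^s x^{k+1}\rb_0 = k\lb\Dx^s x^{k}\rb_0 + \lb\Dx^{s-1} x^{k}\rb_0$ imported from \cite{quintic} and checks, using $(x)_{k+1}=(x)_k(x-k)$ and the Leibniz rule, that $\frac{(-1)^{s+k}}{s!}(x)_k^{(s)}(0)$ satisfies the same recursion. You instead expand $\Dx x^k=\sum_{i=0}^{k-1}(k-1)_i\,x^{k-1-i}$ directly from the definition of $\D$, derive the summation recursion $a_s(k)=(k-1)!\sum_{m<k}a_{s-1}(m)/m!$ for $a_s(k)=\lb\Dx^s x^k\rb_0$, and solve it by exponential generating functions, matching $(-\log(1-u))^s/s!$ against $(\log(1+u))^s=\sum_k (x)_k^{(s)}(0)\,u^k/k!$ --- in effect identifying both sides with unsigned Stirling numbers of the first kind. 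What your approach buys is self-containedness: you do not lean on the recursion from \cite{quintic}, and your observation that $\Dx$ maps strictly negative powers of $x$ into strictly negative powers (so the infinite-order tail created once a constant term appears can never feed back into $\lb\cdot\rb_0$, giving $a_s(0)=\delta_{s,0}$) is precisely the well-definedness point that the paper leaves implicit. What the paper's route buys is brevity, once the \cite{quintic} recursion is granted, and it stays entirely within finite induction rather than formal generating functions. (Incidentally, your identification of $\frac{1}{s!}(x)_k^{(s)}(0)$ as a Stirling number of the \emph{first} kind is the correct one; the paper's surrounding prose refers to the second kind, which appears to be a slip.)
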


\begin{proof}
We proceed by induction on $k$.  When $k=0$ the identity holds trivially. Consider $\lb \Dx^s x^{k+1}\rb_0$.  By \cite{quintic} we have that \begin{align*}\lb \Dx^s x^{k+1}\rb_0&=k\lb \Dx^s x^{k}\rb_0+\lb \Dx^{s-1} x^{k}\rb_0\\&=k\frac{(-1)^{s+k}}{s!}(x)_k^{(s)}(0)+\frac{(-1)^{s+k-1}}{(s-1)!}(x)_k^{(s-1)}(0).  \end{align*} But we also have that \begin{align*}\frac{(-1)^{s+k+1}}{s!}(x)_{k+1}^{(s)}(0)&=\frac{(-1)^{s+k+1}}{s!}((x)_{k}(x-k))^{(s)}(0)\\
&=\frac{(-1)^{s+k+1}}{(s-1)!}(x)_{k}^{(s-1)}(0)-k\frac{(-1)^{s+k+1}}{s!}(x)_{k}^{(s)}(0). \end{align*} Which completes the proof.
\end{proof}

We are now ready for the important corollary below.

\begin{cor}\label{Dtod}
Let $g$ be a power series.  Then $$\lb\Dx^s g\rb_0=\frac{(-1)^s}{s!}[g]^{(s)}(0).$$
\end{cor}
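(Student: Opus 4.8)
The plan is to deduce the general statement from the monomial case established in the preceding lemma, using only the linearity of the two operations involved. Writing $g=\sum a_k x^k$, I would first note that both the functional $h\mapsto\lb\Dx^s h\rb_0$ and the Mahler transform $h\mapsto[h]$ are $\cp$-linear; granting that the functional commutes with the infinite sum defining $g$, this gives
$$\lb\Dx^s g\rb_0=\sum_k a_k\lb\Dx^s x^k\rb_0.$$

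Next I would substitute the lemma, $\lb\Dx^s x^k\rb_0=\frac{(-1)^{s+k}}{s!}(x)_k^{(s)}(0)$, and pull out the constant $(-1)^s/s!$ to obtain
$$\lb\Dx^s g\rb_0=\frac{(-1)^s}{s!}\sum_k a_k(-1)^k(x)_k^{(s)}(0).$$
The final step is to identify the remaining sum with a derivative of the Mahler series. By the definition of the transform, $[g]=\sum_k a_k(-1)^k(x)_k$, so interpreting $[g]^{(s)}(0)$ as the termwise $s$-th derivative evaluated at $0$ gives exactly $[g]^{(s)}(0)=\sum_k a_k(-1)^k(x)_k^{(s)}(0)$, and the corollary follows.

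I expect the real content to be analytic rather than algebraic: once the meaning of $[g]^{(s)}(0)$ is fixed as above, the two sides are literally the same series up to the scalar $(-1)^s/s!$, so the identity is essentially formal. The work, and the main obstacle, is to justify the two interchanges of infinite summation---first past the functional $\lb\Dx^s(\cdot)\rb_0$, then past termwise differentiation---and to confirm that the common series converges. This is where I would invoke the standing growth hypothesis $|a_i i!|\to 0$, together with the continuity of $[g]$ furnished by Lemma \ref{inverse}; the point to check carefully is that this decay controls the coefficients $(x)_k^{(s)}(0)$ (which are $s!$ times the coefficient of $x^s$ in the falling factorial $(x)_k$) uniformly enough to legitimize the rearrangement.
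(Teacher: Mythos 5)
Your proof is correct and follows essentially the same route as the paper's: expand $g$ into monomials, apply the preceding lemma termwise, and recognize the resulting sum as $\frac{(-1)^s}{s!}[g]^{(s)}(0)$. The paper treats the interchange of summation purely formally and does not dwell on the convergence issues you flag, but the core argument is identical.
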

\begin{proof}
Suppose that $g=\sum a_i x^i$ so that $[g]=\sum a_i (-1)^i (x)_i$.  Thus \begin{align*}\lb\Dx^s g\rb_0&=\sum a_i\lb\Dx^s x^i\rb_0\\
&=\sum a_i\frac{(-1)^{i+s}}{s!}(x)_i^{(s)}(0)=\frac{(-1)^s}{s!}[g]^{(s)}(0).\end{align*}
\end{proof}

\subsection{The Dwork exponential and the Gamma function.}
Here we give an interpretation of the transform of the Dwork exponential in terms of the $p$-adic Gamma function. Almost certainly, the formula below is not new, however we were unable to find it in the literature. We then proceed to apply the above results in the next section to shed light on the mysterious objects $$\lb\Dx^s f\rb_0$$ that appeared in \cite{quintic}.

We start with the computation of $[f]$.  There are some formulas that one can find in the literature that hint at the relationship that we establish below.  For example in \cite{rodvill} one can find the following (see also \cite{rob2}): $$\gp(-a+px)=\sum_{k\geq 0} p^k B_{a+kp}(x)^k$$ for $0\leq a<p$ and $x\in\zp$.  And even more enticingly, we learn from \cite{robert} that for $x\in\zp$ one has $$\gp(x+1)=\lb\frac{1-x^p}{x-1}f\rb.$$  In fact we can (we mention two proofs) use the latter in the proposition below.

\begin{remark}
Even though $f$ itself does not define a function on all of $\zp$, one sees that $[f]$ is at least a continuous function on $\zp$.
\end{remark}

\begin{prop}\label{dworktogamma}
Let $f=\f$, then $$[f](x)=\begin{cases}\Gamma_p(x) & |x|<1\\0&|x|=1\quad. \end{cases}$$
\end{prop}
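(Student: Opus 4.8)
The plan is to sidestep the Gamma functional equation and instead read off the values of $[f]$ at the integers directly from Lemma \ref{inverse}, then promote this to the two cases by continuity. First I would apply Lemma \ref{inverse} to $g=f$. Writing $f=e^x e^{x^p/p}$, the conclusion $f=e^x\sum[f](n)(-1)^n x^n/n!$ becomes, after multiplying by the (formally invertible) series $e^{-x}$, the power series identity
$$e^{x^p/p}=\sum_n [f](n)(-1)^n x^n/n!.$$
The left-hand side equals $\sum_m x^{pm}/(p^m m!)$, which is supported only on exponents divisible by $p$. Comparing coefficients of $x^N$ on both sides — legitimate, since this is an identity of formal power series — yields $[f](N)=0$ whenever $p\nmid N$, and $[f](pm)=(-1)^m (pm)!/(p^m m!)$ for every $m\geq 0$ (using that $p$ is odd).

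Next I would upgrade these integer values to the two stated cases using continuity of $[f]$ on $\zp$, which is guaranteed by Lemma \ref{inverse} and recorded in the Remark. For $|x|=1$: the positive integers $N$ with $p\nmid N$ are dense in $\zpx$, and $[f]$ vanishes at all of them, so $[f]\equiv 0$ on $\{|x|=1\}$. For $|x|<1$: the positive multiples $pm$ are dense in $p\zp$, and I would check that $[f](pm)$ agrees with $\gp(pm)$. Using the standard evaluation $\gp(pm)=(-1)^{m}(pm-1)!/(p^{m-1}(m-1)!)$ (which follows from the given functional equation together with $\gp(0)=1$), this reduces to the elementary identity $(pm)!/(p^m m!)=(pm-1)!/(p^{m-1}(m-1)!)$. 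Since both $[f]$ and $\gp$ are continuous, agreement on a dense set forces $[f]=\gp$ on $\{|x|<1\}$.

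An alternative route, closer to the hints preceding the statement, starts from $f'=(1+x^{p-1})f$: applying Lemma 1 gives the difference equation $[f](x+1)=-(x)_{p-1}[f](x-p+1)$, equivalently $[f](y+p)=-(y+1)(y+2)\cdots(y+p-1)[f](y)$, which coincides with the $p$-step functional equation of $\gp$ on $p\zp$; matching the value at $0$, where $[f](0)=B_0=1=\gp(0)$, would then pin down $[f]=\gp$ there by a periodicity-and-continuity argument. I expect the genuinely delicate point, in either approach, to be the vanishing on $\{|x|=1\}$: the difference equation alone cannot detect it, since it only relates points within a fixed residue class modulo $p$, so some input mixing residues is essential — here either the coefficient comparison above, or equivalently the Robert formula $\gp(x+1)=\lb\frac{1-x^p}{x-1}f\rb$. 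The first approach dispatches this cleanly, as the vanishing of $[f]$ at integers prime to $p$ is an immediate consequence of $e^{x^p/p}$ being supported on $p$-divisible exponents.
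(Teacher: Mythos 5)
Your main argument is correct and coincides with the paper's second proof (the direct one attributed to A.~Mellit): apply Lemma \ref{inverse}, cancel $e^x$ to get $e^{x^p/p}=\sum[f](n)(-1)^nx^n/n!$, compare coefficients to find $[f](n)=0$ for $p\nmid n$ and $[f](ap)=(-1)^a(ap)!/(p^aa!)=\gp(ap)$, then conclude by continuity and density. Your sketched alternative via the difference equation is close in spirit to the paper's first proof (which uses Robert's formula $\gp(x+1)=\lb\frac{1-x^p}{x-1}f\rb$ precisely to supply the residue-mixing input you correctly identify as necessary for the vanishing on $\zpx$).
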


\begin{proof}
Let $g$ be a power series such that $[g]=\gp(x+1)$.  Then by the formula in \cite{robert} mentioned above we see that $(1-x^p)f=(x-1)g$.  Thus $[f]-[x^p f]=[xg]-[g]$ and so $[f]+x[x^{p-1}f](x-1)=-x\gp(x)-\gp(x+1)$.  Observe that $x^{p-1}f=\partial f-f$ so that \begin{align*}[x^{p-1}f](x-1)&=[\partial f](x-1)-[f](x-1)\\&=-\nabla[f](x-1)-[f](x-1)=-[f]. \end{align*} We conclude that $[f]-x[f]=-x\gp(x)-\gp(x+1)$.  The latter is $0$ if $|x|=1$ and $(1-x)\gp(x)$ if $|x|<1$. From the definition we see that $[f](1)=B_0-B_1=0$.  The result follows.
\end{proof}

One can also proceed more directly (as suggested by A. Mellit).

\begin{proof}
By Lemma \ref{inverse} we have that $f(x)=e^x\sum[f](n)(-1)^n x^n/n!$, i.e., $$e^{x^p/p}=\sum[f](n)(-1)^n x^n/n!.$$  By comparing coefficients we see that if $p\nmid n$ then $[f](n)=0$ and if $n=ap$ then $[f](n)=\frac{(-1)^a(ap)!}{p^a a!}=\gp(n)$.  We are done by continuity.
\end{proof}

\section{Consequences for $f$.}

After establishing that the transform of the Dwork exponential is essentially the $p$-adic Gamma function we may now derive a multitude of formulas involving the coefficients $B_k$ using the methods of the previous section.  We start with the most important application.

\begin{prop}\label{mystsolved}
Let $f=\f$. Then $$\lb\Dx^s f\rb_0=\frac{(-1)^s}{s!}\gp^{(s)}(0).$$
\end{prop}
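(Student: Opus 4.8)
The plan is to combine the two principal results established above, namely Corollary \ref{Dtod} and Proposition \ref{dworktogamma}. Both apply to the power series $f=\f$, whose coefficients $B_i$ satisfy the growth condition $|B_i i!|\to 0$ assumed throughout, so no hypotheses need to be checked afresh. In outline, Corollary \ref{Dtod} rewrites the quantity $\lb\Dx^s f\rb_0$ in terms of a derivative of the Mahler transform $[f]$, and Proposition \ref{dworktogamma} identifies $[f]$ with $\gp$ near $0$; composing the two gives the formula.

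First I would apply Corollary \ref{Dtod} directly with $g=f$. Since $f$ is a power series of the required type, this immediately yields $$\lb\Dx^s f\rb_0=\frac{(-1)^s}{s!}[f]^{(s)}(0),$$ where $[f]^{(s)}(0)$ denotes the $s$-th derivative at $0$ of the Mahler series $[f]$, computed term by term exactly as in the proof of that corollary.

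The second step is to replace $[f]^{(s)}(0)$ by $\gp^{(s)}(0)$. Here I would invoke Proposition \ref{dworktogamma}, which identifies $[f](x)$ with $\gp(x)$ on the region $|x|<1$, that is, on all of $p\zp$. Since $0\in p\zp$ and $p\zp$ is an open neighborhood of $0$ in $\zp$, the functions $[f]$ and $\gp$ coincide on a neighborhood of $0$. As the $s$-th derivative at $0$ is a purely local quantity, it follows that $[f]^{(s)}(0)=\gp^{(s)}(0)$; substituting this into the display above gives the desired identity.

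The only point requiring care — and the main, rather mild, obstacle — is the compatibility of the two notions of derivative at $0$: the term-by-term derivative of the Mahler series $[f]$ on the one hand, and the genuine analytic derivative of $\gp$ on the other. This is settled by the observation that on $p\zp$ the function $[f]$ agrees with the locally analytic function $\gp$, so the term-by-term derivatives, which converge by the growth condition $|B_i i!|\to 0$, do in fact compute the true derivatives of $\gp$ at $0$. Once this identification is in place the argument is complete, and the entire proof reduces to the two substitutions described.
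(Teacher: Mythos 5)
Your proposal is correct and follows exactly the paper's own argument, which also consists of combining Corollary \ref{Dtod} with Proposition \ref{dworktogamma}; the extra remark about the local agreement of $[f]$ with $\gp$ on $p\zp$ justifying the equality of derivatives at $0$ is a reasonable elaboration of the same idea.
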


\begin{proof}
Combine Proposition \ref{dworktogamma} with Corollary \ref{Dtod}.
\end{proof}

The corollary below generalizes the proposition.  The objects it describes also appear in the Frobenius calculation.  We were mostly interested in the case of $n=p$; the other cases can in principle be avoided if one uses certain general properties that the Frobenius map is known to satisfy.  Thus the corollary's interest to us in its full generality is mainly esthetic.

\begin{cor}\label{fun}
Let $f=\f$, we have $$\lb\Dx^s x^n f\rb_0=\begin{cases} 0 & p\nmid n\\\dfrac{(-1)^s}{s!}\left(p^a(x/p)_a\gp(x)\right)^{(s)}(0)
 & n=ap \quad.\end{cases}$$
\end{cor}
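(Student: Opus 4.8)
The plan is to apply Corollary \ref{Dtod} to the power series $x^n f$, which immediately reduces the whole statement to understanding the single Mahler series $[x^n f]$ as a function near $0$. Indeed, Corollary \ref{Dtod} gives $\lb\Dx^s x^n f\rb_0=\frac{(-1)^s}{s!}[x^n f]^{(s)}(0)$, so both cases of the corollary will follow once $[x^n f]$ is identified on a neighborhood of $0$. First I would rewrite $[x^n f]$ using the identity $[x^n g]=(-1)^n(x)_n[g](x-n)$ from the first lemma of Section~2, obtaining $[x^n f]=(-1)^n(x)_n[f](x-n)$, and then evaluate the factor $[f](x-n)$ via Proposition \ref{dworktogamma}: it equals $\gp(x-n)$ when $x-n\in p\zp$ and vanishes when $x-n\in\zpx$.

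The case $p\nmid n$ is then easy. For $x$ in the neighborhood $p\zp$ of $0$ we have $x-n\equiv-n\not\equiv0\pmod p$, so $x-n\in\zpx$ and $[f](x-n)=0$ identically near $0$. Hence $[x^n f]$ vanishes on a whole neighborhood of $0$, all its derivatives there vanish, and $\lb\Dx^s x^n f\rb_0=0$, which is the first line of the formula.

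The case $n=ap$ carries the real content. For $x\in p\zp$ we have $x-ap\in p\zp$, so $[f](x-ap)=\gp(x-ap)$ and $[x^{ap}f]=(-1)^{ap}(x)_{ap}\gp(x-ap)$ near $0$. The idea is to transport $\gp(x-ap)$ up to $\gp(x)$ by applying the functional equation $\gp(y+1)=-y\gp(y)$ for $|y|=1$ and $\gp(y+1)=-\gp(y)$ for $|y|<1$ a total of $ap$ times. Iterating over the intermediate arguments $x-ap+j$, $0\le j\le ap-1$, produces an overall sign $(-1)^{ap}$ together with a factor $-y$ precisely at those steps with $|x-ap+j|=1$, i.e. $j\not\equiv0\pmod p$. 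Reindexing by $i=ap-j$ shows this product equals $\prod_{0\le i\le ap-1,\;p\nmid i}(x-i)$. On the other hand, splitting $(x)_{ap}=\prod_{i=0}^{ap-1}(x-i)$ according to divisibility of $i$ by $p$ factors off exactly the same product and leaves the complementary factor $\prod_{m=0}^{a-1}(x-mp)=p^a(x/p)_a$. The two copies of $\prod_{p\nmid i}(x-i)$ cancel, and after tracking the sign $(-1)^{ap}=(-1)^a$ (using that $p$ is odd) one is left with $[x^{ap}f]=p^a(x/p)_a\gp(x)$ on a neighborhood of $0$. Differentiating $s$ times at $0$ and feeding the result back into Corollary \ref{Dtod} yields the second line of the formula.

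I expect the main obstacle to be the bookkeeping in the last paragraph: one must verify that the product of unit factors $-y$ arising from the $a(p-1)$ unit-steps of the functional equation matches exactly the product of the factors $(x-i)$ with $p\nmid i$ hidden inside $(x)_{ap}$, so that they cancel and leave precisely $p^a(x/p)_a$. Getting the index ranges right, pinning down the sign $(-1)^{ap}$, and confirming that all the relevant quantities are units on $p\zp$ (so that the factorization of the iterated functional equation is legitimate) is where the care is needed; everything else is a direct chain of the results already established.
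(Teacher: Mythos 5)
Your proposal is correct and follows exactly the paper's route: apply Corollary \ref{Dtod}, rewrite $[x^n f]=(-1)^n(x)_n[f](x-n)$, invoke Proposition \ref{dworktogamma}, and in the $n=ap$ case reduce to the identity $(x)_{ap}\,\gp(x-ap)=(-1)^a p^a(x/p)_a\gp(x)$ near $0$. The only difference is that the paper leaves that identity as ``one checks,'' whereas you carry out the bookkeeping with the functional equation of $\gp$ explicitly, and your verification is right.
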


\begin{proof}
As above we combine Proposition \ref{dworktogamma} with Corollary \ref{Dtod}.  Namely, $$\lb\Dx^s x^n f\rb_0=\frac{(-1)^s}{s!}[x^n f]^{(s)}(0)=\frac{(-1)^s}{s!}((-1)^n(x)_n[f](x-n))^{(s)}(0).$$  If $p\nmid n$ then near $x=0$, the function $[f](x-n)$ is identically $0$; this proves the first part.  Now let $n=ap$, then $$\lb\Dx^s x^{ap} f\rb_0=\dfrac{(-1)^s}{s!}((-1)^a(x)_{a\!p}\,\gp(x-ap))^{(s)}(0).$$  However one checks that for $|x|<1$, we have $$(x)_{a\!p}\,\gp(x-ap)=(-1)^a p^a(x/p)_a\gp(x)$$ and the result follows.
\end{proof}

\begin{remark}
So that for $n=p$ we have: $$\lb\Dx^s x^p f\rb_0=\dfrac{(-1)^s}{s!}(\gp(x)x)^{(s)}(0)=\dfrac{(-1)^s}{(s-1)!}\gp^{(s-1)}(0).$$
\end{remark}

The following result about the coefficients $B_k$ of the Dwork exponential partially intersects with the one found in \cite{katz}, in particular the latter does not address the case of $p\nmid n$. In \cite{katz}   considerations very different from ours are used.

\begin{cor}\label{funapplied}
Recall that $f=\f=\sum B_k x^k$, then for $n>0$
$$\sum B_k(n+k-1)!=\begin{cases}0 & p\nmid n\\(-1)^a p^{a-1} (a-1)! & n=ap\quad .\end{cases}$$
\end{cor}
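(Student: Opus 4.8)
The plan is to recognize the left-hand side $\sum B_k(n+k-1)!$ as the $s=1$ specialization of Corollary~\ref{fun}, and then simply read off the two cases. Concretely, I would first compute $\lb\Dx x^n f\rb_0$ directly from the definitions, show it equals $\sum_k B_k(n+k-1)!$, and then match it against the closed form supplied by Corollary~\ref{fun} at $s=1$.

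For the direct computation I would unwind the operator. Since $\Dx$ acts as $g\mapsto\D(g/x)=\sum_i\partial_x^i(g/x)$ and $f=\sum_k B_k x^k$, we have $\Dx x^n f=\D(x^{n-1}f)=\sum_k B_k\sum_i\partial_x^i x^{n+k-1}$. The monomial $\partial_x^i x^{n+k-1}$ contributes to the constant term only when $i=n+k-1$ (which is a legitimate index since $n\geq 1$ forces $n+k-1\geq 0$), and that contribution is exactly $(n+k-1)!$. Hence $\lb\Dx x^n f\rb_0=\sum_k B_k(n+k-1)!$, which is precisely the left-hand side of the statement.

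It then remains to evaluate the right-hand side of Corollary~\ref{fun} at $s=1$. When $p\nmid n$ the corollary gives $0$ at once. When $n=ap$ it gives $-\left(p^a(x/p)_a\gp(x)\right)'(0)$, so the task reduces to this single derivative. The useful observation is that $(x/p)_a$ has a simple zero at $x=0$: writing $p^a(x/p)_a=p^{a-1}x\prod_{j=1}^{a-1}(x/p-j)$, only the term of the product rule in which the distinguished factor $x$ is differentiated survives at $x=0$. Evaluating the surviving factors at $0$ produces $\prod_{j=1}^{a-1}(-j)=(-1)^{a-1}(a-1)!$ together with $\gp(0)=1$, so the derivative equals $(-1)^{a-1}p^{a-1}(a-1)!$. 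The overall minus sign from Corollary~\ref{fun} then turns this into $(-1)^a p^{a-1}(a-1)!$, as claimed.

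The only point requiring genuine care is this last derivative: recognizing that the simple zero of $(x/p)_a$ collapses the product rule to a single surviving term, and invoking the normalization $\gp(0)=1$. Everything else is routine bookkeeping of the constant term, so I do not expect a serious obstacle.
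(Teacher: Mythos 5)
Your proposal is correct and follows essentially the same route as the paper: identify the sum as $\lb\Dx x^n f\rb_0$, specialize Corollary~\ref{fun} to $s=1$, and for $n=ap$ extract the linear term of $p^a(x/p)_a$ to evaluate the derivative at $0$ (using $\gp(0)=1$). The only difference is that you spell out the verification of $\lb\Dx x^n f\rb_0=\sum_k B_k(n+k-1)!$, which the paper simply asserts.
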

\begin{proof}
Note that $$\sum B_k(n+k-1)!=\lb\Dx x^n f\rb_0.$$  By Corollary \ref{fun} this proves the $p\nmid n$ case.  If $n=ap$ then \begin{align*}\lb\Dx x^{a\!p} f\rb_0&=-(\gp(x)x(x-p)...(x-(a-1)p))'(0)\\
&=-(\gp(x)\{p^{a-1}(-1)^{a-1}(a-1)!x+\text{higher powers of}\,\,x\})'(0)\\&=(-1)^a p^{a-1} (a-1)!\,\gp(0). \end{align*}
\end{proof}

In fact we can handle the case of $n=0$ as well, though the answer is less explicit.

\begin{cor}
With $B_k$ as above, $$\sum_{k\geq 1} B_k (k-1)!=-\gp'(0).$$
\end{cor}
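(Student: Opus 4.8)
The plan is to mimic the structure of the preceding corollary (\ref{funapplied}) but with $n=0$, where the Dwork exponential contributes no extra factor of $x^n$. First I would observe that the quantity $\sum_{k\geq 1} B_k(k-1)!$ is precisely the coefficient extraction $\lb\Dx f\rb_0$ with the constant term removed. Indeed, $\lb\Dx x^k\rb_0 = (k-1)!$ for $k\geq 1$ (this is the $s=1$, $n=0$ specialization of the iterated operator applied to a monomial), while the $k=0$ term is handled separately. So the sum over $k\geq 1$ differs from $\lb\Dx f\rb_0$ by exactly the $k=0$ contribution, which I would need to isolate and account for.

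The cleanest route is to invoke Proposition \ref{mystsolved} directly with $s=1$, which gives $\lb\Dx f\rb_0 = \frac{(-1)^1}{1!}\gp'(0) = -\gp'(0)$. The remaining task is then purely bookkeeping: I must reconcile $\lb\Dx f\rb_0$ (which implicitly includes whatever the $k=0$ term produces) with the sum $\sum_{k\geq 1} B_k(k-1)!$ that explicitly starts at $k=1$. Writing $\lb\Dx f\rb_0 = \sum_{k} B_k \lb\Dx x^k\rb_0$, I would note that $\lb\Dx x^0\rb_0 = \lb\Dx 1\rb_0$; since $\left(\D\frac{1}{x}\right)1$ involves $\partial_x^i$ applied to the constant $1$ divided by $x$, only the $i=0$ term survives the derivative structure, and extracting $[\,\cdot\,]_0$ should make this vanish (or contribute a term that is absorbed), so that the $k=0$ monomial drops out and the full sum collapses to the sum over $k\geq 1$.

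Therefore the result should follow by simply chaining $\sum_{k\geq 1} B_k(k-1)! = \lb\Dx f\rb_0 = -\gp'(0)$, using Proposition \ref{mystsolved} at $s=1$. The main obstacle I anticipate is the careful treatment of the $k=0$ term: because $f$ is only defined on $p\zp$ and the operator $\Dx$ divides by $x$, one must verify that the boundary monomial does not spuriously contribute to the $[\,\cdot\,]_0$ extraction and that the formal manipulation of $\sum_k B_k \lb\Dx x^k\rb_0$ is legitimate term-by-term given the convergence hypothesis $|a_i i!|\to 0$. Once that term is confirmed to vanish, the identity is immediate from the $s=1$ case already established, and no further computation is needed.
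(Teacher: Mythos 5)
Your proposal is correct and follows exactly the paper's (one-line) proof: observe that $\sum_{k\geq 1} B_k(k-1)! = \lb\Dx f\rb_0$ and apply Proposition \ref{mystsolved} with $s=1$. Your extra care about the $k=0$ term is warranted and resolves as you expect, since $\Dx 1 = \D(1/x)$ is a Laurent series in strictly negative powers of $x$ and so contributes nothing to the $x^0$ coefficient.
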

\begin{proof}
Apply Proposition \ref{mystsolved} to the observation $\sum B_k (k-1)!=\lb\Dx f\rb_0$.
\end{proof}

Proposition \ref{dworktogamma} evaluates $\sum B_k(-1)^k(x)_k$.  In particular, when $|x|<1$ the sum is equal to $\gp(x)$.  A natural question to ask is what happens when we shift the coefficients $B_k$.  More precisely, what can one say about $\sum B_{k-s}(-1)^k(x)_k$ for $s\geq 0$?

\begin{cor}
Let $s\geq 0$ and $f=\sum B_k x^k$, then $$\sum B_{k-s}(-1)^k(x)_k=\begin{cases}0 & |x-s|=1\\(-1)^s(x)_s\,\gp(x-s) & |x-s|<1\quad.\end{cases}$$
\end{cor}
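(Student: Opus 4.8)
The plan is to recognize the displayed sum as the transform $[x^s f]$ and then reduce it via the structural results already proved. First I would observe that shifting the coefficients of a power series by $s$ is the same as multiplying by $x^s$: writing $x^s f = \sum_k B_k x^{k+s} = \sum_k B_{k-s} x^k$ (with the convention $B_{k-s}=0$ for $k<s$, which is automatic since the $B$'s are indexed from $0$) and applying the transform term by term, one obtains
$$[x^s f] = \sum_k B_{k-s}(-1)^k (x)_k.$$
This is exactly the left-hand side of the asserted identity, so the corollary is equivalent to evaluating $[x^s f]$.

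Next I would apply the first part of our opening lemma, namely the identity $[x^n g]=(-1)^n(x)_n[g](x-n)$, with $n=s$ and $g=f$. This yields
$$[x^s f] = (-1)^s (x)_s\,[f](x-s).$$
I would stress that this is a purely formal identity requiring no convergence hypothesis on $f$ itself; moreover, since $[f]$ is a continuous function on $\zp$ (as noted in the remark preceding Proposition \ref{dworktogamma}) and $(x)_s$ is a polynomial, the right-hand side is a well-defined continuous function of $x\in\zp$.

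Finally I would invoke Proposition \ref{dworktogamma}, which evaluates $[f]$ at a point $y$ as $\gp(y)$ when $|y|<1$ and as $0$ when $|y|=1$. Setting $y=x-s$ gives $[f](x-s)=0$ precisely when $|x-s|=1$ and $[f](x-s)=\gp(x-s)$ when $|x-s|<1$. Substituting these into the previous display produces the two cases of the stated formula.

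I do not anticipate a genuine obstacle here; the entire content lies in recognizing the shift-equals-multiplication identity and then chaining the opening lemma with Proposition \ref{dworktogamma}. The only point deserving a moment's care is the bookkeeping in the first step, namely carrying out the index shift correctly and checking that the vanishing of $B_{k-s}$ for $k<s$ is consistent with the indexing of the sum. Once that is settled, the remaining two steps are immediate applications of results already in hand.
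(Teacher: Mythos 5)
Your argument is correct and is essentially the paper's own proof: the paper carries out the identity $\sum B_{k-s}(-1)^k(x)_k=(-1)^s(x)_s[f](x-s)$ by direct manipulation (which is just the proof of part (1) of the opening lemma specialized to $g=f$, $n=s$), and then invokes Proposition \ref{dworktogamma} exactly as you do. Citing the lemma rather than redoing its computation inline is a purely cosmetic difference.
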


\begin{proof}
Observe that $$\sum B_{k-s}(-1)^k(x)_k=(-1)^s(x)_s\sum B_{k-s}(-1)^{k-s}(x-s)_{k-s}=(-1)^s(x)_s[f](x-s).$$ We are done by Proposition \ref{dworktogamma}.
\end{proof}

\begin{remark}
Since $\sum B_k (n+k-1)!=\sum B_{k-(n-1)}k!=\sum B_{k-(n-1)}(-1)^k(-1)_k$ the above corollary provides an alternative proof of Corollary \ref{funapplied}.
\end{remark}

Note that $B_k$s can be used to compute the values of the $p$-adic Gamma function at the invertible $p$-adic integers as well.  Namely, if $|x|<1$ and $0<a<p$ then $$\gp(a+x)=(-1)^a(x+1)_{a-1}\sum B_k (-1)^k(x)_k.$$

\section{Connection to the $p$-adic $L$-functions.}
In this section we conclude the proof of the claim made in \cite{quintic} by relating the derivatives of the $p$-adic Gamma function at $0$ to certain values of the  Kubota-Leopoldt $L$-function.  The latter can be expressed in terms of the $p$-adic zeta values by \cite{furusho}.

\begin{lemma}\label{lfandlg}
Let $s\geq 2$, then $$L_p(s,\omega^{1-s})=\frac{(-1)^s}{(s-1)!}(\lgp)^{(s)}(0).$$
\end{lemma}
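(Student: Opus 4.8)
The plan is to realize $\lgp$ near the origin as the exact $p$-adic analogue of the classical expansion $\log\Gamma(1+x)=-\gamma x+\sum_{n\ge 2}\frac{(-1)^n\zeta(n)}{n}x^n$, and then to match its Taylor coefficients against an integral representation of the Kubota--Leopoldt function. First I would record that $\lgp$ is locally analytic near $0$ with $\lgp(0)=\log_p\gp(0)=\log_p 1=0$, since $\gp(0)=1$. The asserted identity is equivalent to the statement that the coefficient of $x^s$ in the Taylor expansion of $\lgp$ at $0$ equals $\frac{(-1)^s}{s}L_p(s,\omega^{1-s})$, so it suffices to compute $(\lgp)^{(s)}(0)$ for $s\ge 2$ and compare.

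The crucial step is an integral representation for the higher derivatives. Starting from the Volkenborn-integral description of $\lgp$ (Diamond's $p$-adic log-gamma), I would differentiate $s$ times under the integral and set $x=0$ to obtain, for $s\ge 2$, $$(\lgp)^{(s)}(0)=(-1)^s(s-2)!\int_{\zpx}t^{\,1-s}\,dt,$$ where the integral is the appropriate $p$-adic (Volkenborn) integral. The restriction of the domain to the units $\zpx$ rather than all of $\zp$ reflects the defining property of $\gp$, whose underlying product skips the multiples of $p$; it is also what makes the integrand $t^{1-s}$ genuinely integrable, there being no singularity at a unit. I expect this to be the main obstacle, for two reasons: one must justify differentiation under the integral near $x=0$, where the naive integrand is singular, and one must verify that the constant is $(s-2)!$ rather than $(s-1)!$. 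The hypothesis $s\ge 2$ is essential here, since passing to second and higher derivatives annihilates the linear term by which the paper's normalization of $\lgp$ differs from Diamond's, so that only the clean power $t^{1-s}$ survives.

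On the $L$-function side I would invoke the standard Mazur-measure (Iwasawa) integral representation of the Kubota--Leopoldt function. For the twist $\chi=\omega^{1-s}$ the character and the $\langle\,\cdot\,\rangle^{1-s}$ factor combine, via $t=\omega(t)\langle t\rangle$ on $\zpx$, into the single power $t^{1-s}$, giving $$L_p(s,\omega^{1-s})=\frac{1}{s-1}\int_{\zpx}t^{\,1-s}\,dt$$ against the same measure as above. Comparing the two displays then finishes the argument by a pure constant count, $$\frac{(-1)^s}{(s-1)!}(\lgp)^{(s)}(0)=\frac{(-1)^s}{(s-1)!}\,(-1)^s(s-2)!\int_{\zpx}t^{\,1-s}\,dt=\frac{1}{s-1}\int_{\zpx}t^{\,1-s}\,dt=L_p(s,\omega^{1-s}).$$ The only substantive work beyond this is reconciling the paper's normalization of $\lgp$ with the chosen normalization of $L_p$: once both integral representations are pinned to the same measure on $\zpx$, the cancellation of the factorial factor $(s-2)!$ against the $\frac{1}{s-1}$ in the $L$-function representation is automatic, and the connection to the $p$-adic zeta values then follows from \cite{furusho} as advertised.
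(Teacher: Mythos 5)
Your proposal is correct and follows essentially the same route as the paper: both sides are expressed as Volkenborn integrals of $t^{1-s}$ over $\zpx$ (the $L$-value via $L_p(s,\omega^{1-s})=\frac{1}{s-1}\int_{\zpx}t^{1-s}\,dt$, the Gamma side via Robert's integral formula for the derivatives of $\lgp$ at $0$), and the identity reduces to a comparison of factorial constants. The only cosmetic difference is that the paper splits into parity cases --- disposing of even $s$ by the oddness of $\lgp$ together with the vanishing of the integral, and quoting the derivative formula $(\lgp)^{(2m+1)}(0)=-(2m-1)!\int_{\zpx}t^{-2m}\,dt$ only for odd $s$ --- whereas you assert the uniform formula $(\lgp)^{(s)}(0)=(-1)^s(s-2)!\int_{\zpx}t^{1-s}\,dt$, which specializes to the same statements in both parities.
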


\begin{proof}
By definition (see \cite{diamond} for example) $$L_p(s,\omega^{1-s})=\frac{1}{s-1}\lim_{k\rightarrow \infty}\frac{1}{p^{k+1}}\sum_{\dind{n=1}{p\nmid n}}^{p^{k+1}}\frac{1}{n^{s-1}}.$$   The latter is simply $\dfrac{1}{s-1}\displaystyle\int_{\zpx}t^{1-s}\, dt$.  Note that the integral vanishes for even $s\geq 2$, and $\lgp$ is odd (see \cite{robert} for both claims).  Thus the case of even $s$ is proven.  Now let $s=2m+1$ with $m\geq 1$.  Then $$L_p(2m+1,\omega^{-2m})=\frac{1}{2m}\int_{\zpx}t^{-2m}\, dt$$ while (see \cite{robert}) $$(\lgp)^{(2m+1)}(0)=-(2m-1)!\int_{\zpx}t^{-2m}\, dt$$ and we are done.
\end{proof}

The above provides an extension (via a minor modification) of the formula found in \cite{koblitz} to the case of the trivial character.  Namely, for $s\geq 2$ and $\chi$ a primitive Dirichlet  character of conductor $d$ we have $$L_p(s,\chi\omega^{1-s})=\frac{(-d)^{-s}}{(s-1)!}\sum_{0\leq a<d}\chi(a)(\lgp)^{(s)}\left(\frac{a}{d}\right).$$  The formula in \cite{koblitz} is valid for $s\geq 1$ but only for $\chi\neq\chi_{\text{triv}}$.  The modification is the extension of the range of summation above to include $a=0$.

\subsection{Evaluation of $\Delta_3$.}
In \cite{quintic} we defined $$\Delta_s=\lb\Dx^s f\rb_0-\frac{\lb\Dx f\rb_0^s}{s!},$$ and noted that $$\Delta_2=0$$ followed from the general properties of the Frobenius map.  Now that we have an interpretation in terms of the Gamma derivatives, this fact follows from the observation (see \cite{robert} for example) that $\lgp$ is odd.  The following theorem addresses the question of $\Delta_3$ whose value was stated only conjecturally in \cite{quintic}.

\begin{theorem}
We have that $$\Delta_3=\frac{L_p(3,\omega^{-2})}{3}\,.$$
\end{theorem}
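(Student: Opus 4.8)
The plan is to translate $\Delta_3$ into derivatives of $\gp$ at $0$ via Proposition \ref{mystsolved}, convert those into a single third derivative of $\lgp$, and then read off the answer from Lemma \ref{lfandlg}. First I would apply Proposition \ref{mystsolved} at $s=1$ and $s=3$ to get $\lb\Dx f\rb_0=-\gp'(0)$ and $\lb\Dx^3 f\rb_0=-\tfrac{1}{6}\gp'''(0)$, so that
$$\Delta_3=\lb\Dx^3 f\rb_0-\frac{\lb\Dx f\rb_0^3}{6}=\frac{1}{6}\left((\gp'(0))^3-\gp'''(0)\right).$$
In parallel, Lemma \ref{lfandlg} at $s=3$ gives $L_p(3,\omega^{-2})=-\tfrac{1}{2}(\lgp)'''(0)$, so the theorem reduces to the single identity $(\lgp)'''(0)=\gp'''(0)-(\gp'(0))^3$.

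Next I would expand the logarithmic derivative. Since $\gp(0)=1$ (indeed $[f](0)=B_0=1$ by Proposition \ref{dworktogamma}), near $x=0$ one has $\lgp=\log\gp$, and the usual chain-rule expansion gives
$$(\lgp)'''(0)=\gp'''(0)-3\gp'(0)\gp''(0)+2(\gp'(0))^3.$$
The obstruction is the cross term $\gp'(0)\gp''(0)$, which has no counterpart in the expression for $\Delta_3$. I would remove it using the already-known vanishing $\Delta_2=0$: Proposition \ref{mystsolved} at $s=2$ yields $\Delta_2=\tfrac{1}{2}\left(\gp''(0)-(\gp'(0))^2\right)$, and $\Delta_2=0$ (equivalently, the even-order vanishing at $0$ of the odd function $\lgp$ invoked in Lemma \ref{lfandlg}) forces $\gp''(0)=(\gp'(0))^2$.

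Substituting this into the displayed expansion collapses it to $(\lgp)'''(0)=\gp'''(0)-3(\gp'(0))^3+2(\gp'(0))^3=\gp'''(0)-(\gp'(0))^3$, which is exactly the required identity. Combining with the first two displays then gives $\Delta_3=-\tfrac{1}{6}(\lgp)'''(0)=\tfrac{1}{3}L_p(3,\omega^{-2})$, as claimed. The only real subtlety lies in the bookkeeping of passing from the derivatives of $\gp$ to those of $\lgp$, together with the recognition that the one awkward term $\gp'(0)\gp''(0)$ is precisely what the relation $\Delta_2=0$ controls; the remaining steps are a short substitution.
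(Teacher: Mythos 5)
Your proposal is correct and follows the same route as the paper: express $\Delta_3$ via Proposition \ref{mystsolved}, reduce to the identity $(\lgp)'''(0)=\gp'''(0)-(\gp'(0))^3$, and conclude with Lemma \ref{lfandlg}. The only difference is that the paper asserts that identity directly from the oddness of $\lgp$, whereas you spell out the chain-rule expansion of $\log\gp$ and eliminate the cross term $\gp'(0)\gp''(0)$ using $\gp''(0)=(\gp'(0))^2$ (equivalently $\Delta_2=0$), which is a correct and welcome elaboration of the step the paper leaves implicit.
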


\begin{proof}
Using the fact that $\lgp$ is odd we see that $$(\lgp)^{(3)}(0)=\gp'''(0)-(\gp'(0))^3.$$ By definition $\Delta_3=\lb\Dx^3 f\rb_0-\lb\Dx f\rb_0^3/3!$, and by Proposition \ref{mystsolved} the latter is $-\frac{1}{6}(\gp'''(0)-(\gp'(0))^3)=-\frac{1}{6}(\lgp)^{(3)}(0)$.  We are done by Lemma \ref{lfandlg}.
\end{proof}

\begin{remark}
The connection between the zeta values and $L$-functions can be found in \cite{furusho}. More precisely, we can take advantage of the formula $\zeta_p(s)=\dfrac{p^s}{p^s-1}L_p(s,\omega^{1-s})$ to obtain $$\Delta_3=\left(1-\frac{1}{p^3}\right)\frac{\zeta_p(3)}{3}\,.$$
\end{remark}

We conclude that the non-trivial off-diagonal Frobenius matrix entry in \cite{quintic} is indeed $$p^3\frac{24}{25}\Delta_3=p^3\frac{2^3}{5^2}\,L_p(3,\omega^{-2})=(p^3-1)\frac{2^3}{5^2}\,\zeta_p(3)$$ as promised.

\bigskip
\noindent{\bf Acknowledgments.} We are indebted to P. Candelas who noticed the similarity between formulas appearing in \cite{quintic} and in his own work. His observation, based on this similarity, that one should be able to express in a very precise way the mysterious quantity in \cite{quintic} in terms of the derivatives of the $p$-adic Gamma function at $0$ was the starting point of this paper.  The appearance of zeta of $3$ was predicted by M. Kontsevich and V. Vologodsky, and we thank V. Vologodsky for his patience in providing a detailed explanation. We thank M. Hadian Jazi and A. Mellit  for useful discussions.  We would also like to thank A. Schwarz for pointing out some references, as well as for encouragement and advice.    We appreciate the hospitality of MPIM Bonn and
University of Waterloo where parts of this paper were written.

\medskip
\noindent Max-Planck-Institut f\"{u}r Mathematik, Bonn
\newline \emph{E-mail address}:
\textbf{ishapiro@mpim-bonn.mpg.de}


\begin{thebibliography}{99}
\bibitem{diamond}
J. Diamond, \emph{On the values of $p$-adic $L$-functions at positive integers}, Acta Arith. \textbf{35} (1979), no. 3, 223--237.




\bibitem{furusho}
H. Furusho, \emph{$p$-adic multiple zeta values I -- $p$-adic multiple polylogarithms and the $p$-adic KZ equation}, Invent. Math. \textbf{155} (2004), no. 2, 253--286.


\bibitem{katz}
N. M. Katz, \emph{On the differential equations satisfied by period
matrices}, Publications Math\'{e}matiques de l'IH\'{E}S, \textbf{35}
(1968), 71--106.





\bibitem{koblitz}
N. Koblitz, \emph{
A new proof of certain formulas for $p$-adic $L$-functions},
Duke Math. J. \textbf{46} (1979), no. 2, 455--468.

\bibitem{ksv}
M. Kontsevich, A. Schwarz, V. Vologodsky,  \emph{Integrality of
instanton numbers and $p$-adic B-model}, Phys. Lett. B \textbf{637}
(2006), 97--101.


\bibitem{robert}
A. M. Robert, \emph{A course in $p$-adic analysis},
Graduate Texts in Mathematics, \textbf{198}. Springer-Verlag, New York, 2000. xvi+437 pp.

\bibitem{rob2}
A. M. Robert, \emph{The Gross-Koblitz Formula Revisited}, Rend. Sem. Mat. Univ. Padova \textbf{105} (2001), 157--170.

\bibitem{rodvill}
F. Rodriguez Villegas, \emph{
Experimental number theory},
Oxford Graduate Texts in Mathematics, \textbf{13}. Oxford University Press, Oxford, 2007. xvi+214 pp.

\bibitem{sv1}
A. Schwarz, V. Vologodsky,  \emph{Frobenius transformation, mirror map and instanton numbers}, Phys. Lett. B \textbf{660}
(2008), 422--427, Preprint hep-th/0606151.

\bibitem{sv2}
A. Schwarz, V. Vologodsky,  \emph{Integrality theorems in the theory of topological strings}, Preprint arXiv:0807.1714.



\bibitem{quintic}
I. Shapiro, \emph{Frobenius map for quintic threefolds},
Int. Math. Res. Not. IMRN 2009, no. \textbf{13}, 2519--2545.

\bibitem{vologodsky}
V. Vologodsky,  \emph{Integrality of instanton numbers}, Preprint arXiv:0707.4617.



\end{thebibliography}
\end{document}